\newtheorem{theorem}{Theorem}[section]
\newtheorem{lemma}[theorem]{Lemma}
\newtheorem{corollary}[theorem]{Corollary}
\newtheorem{prop}[theorem]{Proposition}
\theoremstyle{definition}
\newtheorem{definition}[theorem]{Definition}
\newtheorem{question}{Question}
\theoremstyle{remark}
\numberwithin{equation}{section}
\begin{document}

 \title[]{RADICALLY FINITE RINGS AND SPACE CURVES}


\author[]{Vahap Erdoğdu}
\address{Department of Electrical and Electronic Engineering Piri Reis University, 34940 Tuzla, Istanbul, Turkey}
\curraddr{}
\email{verdogdu@pirireis.edu.tr}
\thanks{}

\subjclass[2020]{Primary 13A18, 13B22, 13B25, 13C20, 13F05, 14H50}




\keywords{Radically finite rings, Integral closure and Polynomial rings, Noetherian rings, Class groups, Prüfer rings, UFD, Space curves}

\begin{abstract}
We define radically finite rings and show that a finite dimensional radically finite ring is Noetherian and that if either $R$ is a finite character Hilbert domain that contains a field of characteristic zero or a finite dimensional Prüfer domain, then the polynomial ring $R[X]$ over $R$  is radically finite if and only if $R$ is a Dedekind  domain with torsion ideal class group. We then consider the radically finite condition on UFD and show that there does not exist a finite character 
 UFD $R$ of Krull dimension 2 over which  the polynomial ring $R[X]$  is radically finite. 
From this it follows that not all space curves are set theoretic complete intersection.
\end{abstract}

\maketitle
\section{Introduction, Definitions and Motivations}

We begin by recalling (see, e.g. \cite{4,5,6}) that an ideal $I$ of a commutative ring $R$ with identity is radically perfect if among the ideals of $R$ whose radical is equal to the radical of $I$ the one with the least number of generators has this number of generators equal to the height of $I$ and if  the  height of $I$ is zero, then the radical of $I$ is equal to the radical of a principal ideal generated by a zero divisor of $R.$ That is $I$ is radically perfect if
$ ht(I)=\inf \{n \,| \, \sqrt{I}=\sqrt{(\theta_1, \theta_2,...,\theta_n)},  \theta_i\in R \: \text{and if} \: ht(I)=0, \: \text{then} \: \sqrt{I}=\sqrt{(\theta)}\:
 \text{for some zero divisor} \: \theta \:  \text{of}\, \,  R\}.$

\begin{definition}
  Call a ring $R$ radically finite if each prime ideal $P$ of $R$ is radically perfect and the set of ideals of $R$ generated by $ht(P)$-number of elements with radical $P$ has a maximal member $A$ and that there are only finitely many ideals (in a chain) between $A$ and $P$.
\end{definition}
The need for this definition arose from the following:
\begin{question}
 Is a Prüfer domain of finite Krull dimension having each prime ideal of it radically perfect a Dedekind domain with torsion ideal class group?
\end{question}

The answer to this question is no. For if $(R,M)$ is a non-Noetherian valuation domain of Krull dimension one, then the maximal ideal $M$ of $R$ is radically perfect, since $M=\sqrt{(\theta)}$ for any non-zero element  $\theta$ of $M.$ But $R$ is not a Dedekind domain i.e. is not a discrete valuation ring (DVR). That is we have   $M=\sqrt{(\theta)}$ but $M\neq (\theta)$, and so there is an element $\Pi_1$ in $M$ not in $(\theta),$ and as $R$ is  a valuation ring $(\theta)\subset (\Pi_1)$ with $M=\sqrt{(\Pi_1)}$ but $M\neq (\Pi_1).$ Continuing in  this way we obtain an ascending chain
 \[ (\theta)\subset(\Pi_1)\subset(\Pi_2)\subset...\subset(\Pi_n)\subset...\]
of principal ideals between $(\theta)$ and $M.$ Now if $R$ is radically finite then this chain is stationary and so there is a least positive integer $n$ such that $(\Pi_n)=(\Pi_k)$ for all $k\geq n.$ Hence $M=(\Pi_n)$ and so $(R,M)$ is a DVR.

Note that any radically finite valuation ring is a DVR. For if $(R,M)$ is any valuation ring, then $M=\sqrt{(\theta)}$ for any element $\theta$ in $M$ not contained in the union of all prime ideals of $R$ properly contained in $M.$ So if $(R,M)$ is radically finite then $ht(M)=1=dim R,$ and so $(R,M)$ is a DVR.

\section{Finite Dimensional Radically Finite Rings}
It easily follows from the definition that a semilocal Noetherian ring of dimension $\leq1$ is radically finite and for the converse statement we have Corollary 2.2. below which is an easy consequence of the following result.
\begin{theorem}
  Let $R$ be a radically finite ring and let $P$ be a prime ideal of $R.$ If $P$ is of finite height, then $P$ is finitely generated.
\end{theorem}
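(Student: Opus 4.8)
The plan is to exploit the two halves of the radically finite hypothesis in turn: first the radically perfect condition, together with the finiteness of $ht(P)$, to produce a \emph{finitely generated} ideal $A$ with $\sqrt{A}=P$; and then the chain condition to promote finite generation of $A$ to finite generation of $P$ itself.

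First I would set $h=ht(P)$, which is finite by hypothesis. Since $R$ is radically finite, $P$ is radically perfect, so there exist $\theta_1,\dots,\theta_h\in R$ with $\sqrt{(\theta_1,\dots,\theta_h)}=P$; moreover the definition of radically finite guarantees that among all ideals generated by $h$ elements whose radical is $P$ there is a maximal member $A$, which I fix, say $A=(\theta_1,\dots,\theta_h)$. Then $A\subseteq\sqrt{A}=P$, and, being generated by $h$ elements, $A$ is finitely generated. This is the one place where the finiteness of $ht(P)$ is essential: without it radical perfectness would place no bound on the number of generators needed to produce an ideal with radical $P$.

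Next I would invoke the remaining part of the radically finite condition, that there are only finitely many ideals in a chain between $A$ and $P$. I read this as saying that the interval $[A,P]$ in the lattice of ideals of $R$ contains no infinite strictly ascending chain, i.e. that $[A,P]$ satisfies the ascending chain condition. Translating through the order isomorphism between this interval and the lattice of $R$-submodules of the quotient module $P/A$ — each ideal $I$ with $A\subseteq I\subseteq P$ corresponding to the submodule $I/A$ — this says precisely that $P/A$ is a Noetherian $R$-module, hence a finitely generated one.

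Finally, finite generation of $P$ would follow by lifting generators: if $P/A=R\overline{p_1}+\dots+R\overline{p_k}$ with $p_i\in P$, then $P=A+(p_1,\dots,p_k)=(\theta_1,\dots,\theta_h,p_1,\dots,p_k)$, so $P$ is finitely generated. I expect the argument to be this direct. The only point requiring genuine care is the precise reading of the phrase ``only finitely many ideals in a chain'' — whether it asserts that every chain between $A$ and $P$ is finite or that the entire interval $[A,P]$ is finite — but since either reading delivers the ascending chain condition on $[A,P]$, which is all that the module-theoretic step needs, I do not anticipate a real obstacle beyond fixing this interpretation.
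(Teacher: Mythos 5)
Your proposal is correct and follows essentially the same route as the paper: fix an $ht(P)$-generated ideal $A$ with $\sqrt{A}=P$, use the chain-finiteness between $A$ and $P$ to conclude $P/A$ is a finitely generated $R$-module, and lift generators. The only cosmetic difference is that the paper asserts $P/A$ has finite length (using finiteness of chains in both directions), whereas you extract just the ascending chain condition, i.e.\ that $P/A$ is Noetherian, which suffices equally well.
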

\begin{proof}
 Let $P$ be of finite height and let $n=ht(P).$ Since $R$ is radically finite, there exists an $n$-generated ideal $A$ of $R$ such that
  $\sqrt{A}=P$ and $\{I\mid I \: \,   \text{is an ideal of} \, \: R \: \,\text{in } \\ \text{a chain between A and P} \}$ is finite. Observe that $P/A$ is an $R$-module of finite length, and
  hence $P/A$ is a finitely generated $R$-module. Since $A$ is a finitely generated ideal of $R,$ we infer that $P$ is finitely generated.
\end{proof}
\begin{corollary} Finite dimensional radically finite rings are Noetherian.
\end{corollary}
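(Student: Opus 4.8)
The plan is to reduce the statement to Theorem 2.1 by invoking Cohen's characterization of Noetherian rings. First I would recall the classical theorem of I. S. Cohen: a commutative ring with identity is Noetherian if and only if each of its prime ideals is finitely generated. Granting this, it suffices to verify that every prime ideal of a finite dimensional radically finite ring is finitely generated.

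So let $R$ be a radically finite ring with $\dim R = d < \infty$, and let $P$ be an arbitrary prime ideal of $R$. Since the height of any prime is bounded by the Krull dimension, we have $ht(P) \leq \dim R = d$, so $P$ is a prime ideal of finite height. This is precisely where the finite dimensionality hypothesis enters, and it is the only place it is needed: it guarantees that the finite-height hypothesis of Theorem 2.1 holds for \emph{every} prime of $R$ simultaneously. Applying Theorem 2.1 to $P$ then gives that $P$ is finitely generated.

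Since $P$ was arbitrary, every prime ideal of $R$ is finitely generated, and Cohen's theorem concludes that $R$ is Noetherian. There is no real obstacle here; the content lies entirely in Theorem 2.1, and the corollary is the expected formal consequence once Cohen's theorem is brought in. The only point worth flagging is to make sure Cohen's theorem is cited or assumed, since without it one would have to argue Noetherianity directly from the ascending chain condition, which is less immediate.
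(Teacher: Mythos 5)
Your proof is correct and follows the paper's own argument exactly: apply Theorem 2.1 to each prime ideal (finite height being guaranteed by finite Krull dimension) and conclude via Cohen's theorem. Your explicit remark that finite dimensionality is used precisely to bound $ht(P)$ for every prime is a small clarification the paper leaves implicit, but the route is identical.
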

\begin{proof}
Let $R$ be a finite dimensional radically finite ring and $P$ be any prime ideal of $R.$ Then it follows from the above theorem that $P$ is finitely generated and hence
by Cohen's theorem $R$ is Noetherian (see, \cite{1}).
\end{proof}

\begin{corollary}
 A finite dimensional Prüfer domain is radically finite if and only if it is a Dedekind domain with torsion ideal class group.
\end{corollary}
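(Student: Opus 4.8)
The plan is to treat the two implications separately, extracting the Noetherian conclusion from Corollary 2.2 in one direction and unwinding the definition of radically finite over a Dedekind domain in the other. For the forward implication, suppose $R$ is a finite dimensional radically finite Prüfer domain. I would first invoke Corollary 2.2 to conclude that $R$ is Noetherian. Since a Prüfer domain localizes to a valuation domain at every prime, and a Noetherian valuation domain is a DVR, each nonzero prime of $R$ has height one; hence $R$ is an integrally closed Noetherian domain of Krull dimension at most one, that is, a Dedekind domain. To obtain the torsion condition on the class group, I would use that each nonzero prime $P$ has $ht(P)=1$ and, being radically perfect, satisfies $P=\sqrt{(\theta)}$ for a single $\theta\in R$. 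In a Dedekind domain this forces $(\theta)=P^{m}$ for some $m\ge 1$, so $[P]^{m}=0$ in the ideal class group; since that group is generated by the classes of the nonzero primes, it is torsion.

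For the converse, suppose $R$ is a Dedekind domain with torsion ideal class group, and verify the definition of radically finite prime by prime. The zero prime is radically perfect by convention, so fix a nonzero prime $P$, necessarily of height one. Finiteness of the order $n$ of $[P]$ gives $P^{n}=(\theta)$ principal, whence $\sqrt{(\theta)}=\sqrt{P^{n}}=P$ and $P$ is radically perfect. I would then pin down the principal ideals with radical $P$: by unique factorization of ideals they are exactly the powers $P^{kn}$ with $k\ge 1$, whose largest member under inclusion is $A=P^{n}$, which is therefore the maximal member of the set of $ht(P)$-generated ideals with radical $P$. Finally, the ideals lying in a chain between $A=P^{n}$ and $P$ are precisely $P,P^{2},\dots,P^{n}$, a finite set, so the maximality-and-finiteness clause holds for every prime and $R$ is radically finite.

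The calculations here—ideal factorization and the inclusion order on powers of $P$—are routine in Dedekind domains, so I expect the only substantive external input to be the equivalence that a Noetherian Prüfer domain is Dedekind, which I obtain by localizing to DVRs. The main point requiring care is the converse, where the full radically finite condition, and not merely radical perfectness, must be checked; once the principal ideals with radical $P$ are identified as the $P^{kn}$, both the existence of the maximal member $A=P^{n}$ and the finiteness of the chain up to $P$ follow at once, so I do not anticipate a serious obstacle beyond this bookkeeping.
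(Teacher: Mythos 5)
Your proof is correct and follows essentially the same route as the paper: Corollary 2.2 gives Noetherianity, Noetherian Pr\"ufer gives Dedekind, and $P=\sqrt{(\theta)}$ forces $(\theta)=P^{m}$, hence torsion class group. For the converse the paper simply writes ``requires no comments,'' whereas you carry out the verification---identifying the principal ideals with radical $P$ as the powers $P^{kn}$, taking $A=P^{n}$, and listing the finite chain $P,P^{2},\dots,P^{n}$---which is a correct and welcome filling-in of the omitted detail rather than a different approach.
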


\begin{proof}
  Let $R$ be a finite dimensional radically finite Prüfer domain. Then $R$ is a Noetherian Prüfer domain and hence is a Dedekind domain. Thus $R$ is  a  radically finite Dedekind domain, and so each non-zero prime ideal of $R$ is the radical of a principal ideal. That is if $M$ is a non-zero prime ideal of $R,$ then $M=\sqrt{(\Pi)}$ for some non-zero element $\Pi$ of $M.$ But then $M^{n}=(\Pi)$ for some positive integer $n$ and therefore the class group of $R$ is torsion.

The converse requires no comments.
\end{proof}

\begin{corollary}
  A finite dimensional B$\acute{e}$zout Domain is radically finite if and only if it is a principal ideal domain PID.
\end{corollary}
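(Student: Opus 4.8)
The plan is to exploit the two standard facts that a B\'ezout domain is in particular a Pr\"ufer domain (every finitely generated ideal, being principal, is invertible) and that, among Noetherian domains, being B\'ezout is the same as being a PID. With these observations both implications reduce almost immediately to the results already established, namely Corollary 2.2 and Corollary 2.3, so the argument is essentially a short deduction rather than new work.

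For the forward direction, suppose $R$ is a finite dimensional radically finite B\'ezout domain. First I would invoke Corollary 2.2 to conclude that $R$ is Noetherian. In a Noetherian ring every ideal is finitely generated, and in a B\'ezout domain every finitely generated ideal is principal; combining these shows that every ideal of $R$ is principal, so $R$ is a PID. Alternatively one may route through Corollary 2.3: since a B\'ezout domain is Pr\"ufer and $R$ is finite dimensional and radically finite, $R$ is a Dedekind domain with torsion class group, and a Dedekind domain all of whose finitely generated ideals are principal has trivial class group, hence is a PID.

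For the converse, suppose $R$ is a PID. Then $R$ is Noetherian of Krull dimension at most one, and every ideal is principal, so $R$ is finite dimensional and trivially B\'ezout. It remains to verify that $R$ is radically finite. Since the class group of a PID is trivial, it is in particular torsion, so when $\dim R = 1$ the domain $R$ is Dedekind with torsion class group and the converse half of Corollary 2.3 applies verbatim; the case $\dim R = 0$, where $R$ is a field, is immediate. One can also check the definition directly: for a nonzero (hence maximal) prime $P = (p)$ the ideals with radical $P$ are exactly the powers $(p^{n})$, so the largest $1$-generated such ideal is $A = (p) = P$ and the chain between $A$ and $P$ is trivial, while the minimal prime $(0)$ is dealt with at height zero.

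I do not anticipate a genuine obstacle here, since every step is forced by the preceding corollaries. The only points that require a moment of care are the reduction \emph{Noetherian} $+$ \emph{B\'ezout} $\Rightarrow$ \emph{PID} and the observation that a B\'ezout domain lies inside the Pr\"ufer class so that Corollary 2.3 is available; once these are in place, the stated equivalence follows at once.
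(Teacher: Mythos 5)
Your proof is correct and takes exactly the route the paper intends: the paper states this corollary without any proof, treating it as an immediate consequence of Corollaries 2.2 and 2.3, and your deduction (finite dimensional $+$ radically finite $\Rightarrow$ Noetherian, and Noetherian $+$ B\'ezout $\Rightarrow$ PID, with the converse via the torsion-class-group half of Corollary 2.3) is precisely that implicit argument. Your additional direct check of the converse --- that for a nonzero prime $P=(p)$ the ideals with radical $P$ are exactly the powers $(p^{n})$, so the maximal one-generated member is $P$ itself --- is a worthwhile verification the paper omits, and it is correct.
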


Note that a Dedekind domain need not be radically finite, unless its class group is torsion, and when that is the case, then the polynomial ring over it is also  radically finite. In fact we have:

\begin{corollary}\label{2.4}
  The polynomial ring $R[X]$ over a finite dimensional Prüfer domain $R$ is radically finite if and only if $R$ is a Dedekind domain with torsion ideal class group.
\end{corollary}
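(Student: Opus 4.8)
The plan is to prove both directions by feeding the two-dimensional structure of $R[X]$ into the finite-dimensional machinery of Corollaries 2.2 and 2.3. For the forward implication, assume $R[X]$ is radically finite. Since a finite-dimensional Prüfer domain satisfies $\dim R[X] = \dim R + 1$, the ring $R[X]$ is again finite dimensional, hence Noetherian by Corollary 2.2; consequently $R \cong R[X]/(X)$ is a Noetherian Prüfer domain and therefore a Dedekind domain. To obtain torsion of the class group I would argue as in Corollary 2.3, but on the extended primes: for a maximal ideal $p$ of $R$ the prime $p[X]$ is invertible of height one, and radical perfectness gives $p[X] = \sqrt{(f)}$ for a single $f$; comparing this with the factorization of $(f)$ in the Krull domain $R[X]$ forces $(p[X])^{k} = (f)$ for some $k$, so the class of $p[X]$ is torsion. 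Under the isomorphism $\mathrm{Pic}(R[X]) \cong \mathrm{Pic}(R)$ (valid since $R$ is normal) this is the class of $p$, and as such classes generate $\mathrm{Cl}(R)$, the class group of $R$ is torsion.

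For the converse, let $R$ be Dedekind with torsion class group, so that $R[X]$ is a two-dimensional Noetherian Krull domain. I would verify radical perfectness prime by prime. The zero ideal is radically perfect. A height-one prime $P$ is the radical of a principal ideal, since torsion of the class group makes a divisorial power of $P$ principal. For a height-two maximal ideal $M$ I would put $p = M \cap R$, choose $\pi$ with $p^{n} = (\pi)$, and choose $h \in M$ whose residue generates the maximal ideal $M/p[X]$ of the polynomial ring $(R/p)[X]$; then $\sqrt{(\pi,h)} = p[X] + hR[X] = M$, so $M$ is radically perfect with two generators.

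The remaining point, and the one I expect to be the main obstacle, is the chain-finiteness condition. Noetherianness supplies the maximal member $A$ of the relevant family for free. When $M$ is maximal, the radical of $A$ equals $M$, so $R[X]/A$ has a unique prime and is Artinian; hence $M/A$ has finite length and only finitely many ideals lie in a chain between $A$ and $M$. The delicate case is a non-maximal height-one prime $P = p[X]$, where $A = \pi R[X]$ is the largest principal ideal with radical $P$, the quotient $R[X]/A$ has dimension one, and the ideals between $A$ and $P$ correspond to ideals of $(R/p^{n})[X]$ contained in $(p/p^{n})[X]$. Controlling this finiteness through the torsion hypothesis is the crux on which the equivalence rests, and the step I would expect to occupy most of the work.
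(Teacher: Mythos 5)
Your forward direction is sound, and it is actually more self-contained than the paper's: the paper likewise uses $\dim R[X]=\dim R+1$ together with Corollary 2.2 to get $R[X]$ (hence $R$) Noetherian and $R$ Dedekind, but it then disposes of everything else --- both the torsion conclusion and the entire converse --- by a single citation of Theorem 2.1 of \cite{3}. Your divisorial argument (the only height-one prime containing $f$ is $p[X]$, and since $p[X]$ is invertible its symbolic and ordinary powers agree, so $p[X]=\sqrt{(f)}$ forces $(p[X])^{k}=(f)$), combined with $\mathrm{Pic}(R[X])\cong \mathrm{Pic}(R)$, is a correct replacement for that citation in the forward direction, and your prime-by-prime verification of radical \emph{perfectness} in the converse is also fine --- indeed it is essentially the content of the cited theorem.

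The genuine gap is exactly the one you flag and then leave open: the chain-finiteness condition for non-maximal height-one primes, without which the converse is simply not proved. Moreover, the difficulty is worse than ``the most work.'' For $P=p[X]$ with $p$ a nonprincipal maximal ideal of $R$ whose class has order $k\geq 2$, say $p^{k}=(\pi)$, every principal ideal with radical $P$ has the form $p^{m}[X]$ with $k\mid m$, so the maximal member of the family is $A=\pi R[X]=p^{k}[X]$; but then $P/A$ admits infinite strictly descending chains, e.g.\ the images of $I_{m}=p^{k}[X]+X^{m}p[X]$ (if $t\in p\setminus p^{2}$, then $tX^{m}\in I_{m}\setminus I_{m+1}$), so the condition cannot follow from Noetherianness or any general finiteness principle --- note that the paper's own proof of Theorem 2.1 reads the chain condition as saying precisely that $P/A$ has finite length, under which reading this case is obstructed unless $p$ is principal and $A=P$. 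So to close your proof you must either resolve this case directly under a clarified reading of Definition 1.1 (e.g.\ chains \emph{within} the family of $\mathrm{ht}(P)$-generated ideals with radical $P$, where maximality of $A$ settles it) or do what the paper does and invoke Theorem 2.1 of \cite{3} wholesale; as written, your converse stops at its crux.
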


\begin{proof}
Since $R$ is Prüfer, dim$R[X]$=dim$R$+1 and so $R[X]$ is a finite dimensional radically finite ring and therefore is Noetherian. Hence $R$ is Noetherian and being Prüfer is a Dedekind domain. But then the result follows from Theorem 2.1 of \cite{3}.
\end{proof}

It now goes without saying that Artinian rings $\subset$ Finite dimensional radically finite rings $\subset$  Noetherian rings, and that radically finite Noetherian rings are just  Noetherian rings with radically perfect prime ideals. It is also clear from the above results that the following is a legitimate question.

\begin{question}
  Are finite dimensional radically finite rings Cohen-Macaulay?
\end{question}

The results of the next two sections are also in support of this question.

\section{Radically Finite Hilbert Rings}
We begin this section by recalling that a ring $R$ is of finite character if each non-zero element of $R$ is contained in only finitely many maximal ideals.\\

Our main result of this section is the following Theorem.
\begin{theorem}
Let $R$ be a finite character Hilbert domain containing a field of characteristic zero. Then the polynomial ring $R[X]$ over $R$ is radically finite if and only if $R$ is a Dedekind domain with torsion ideal class group.
\end{theorem}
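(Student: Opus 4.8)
The plan is to prove the two implications separately, with essentially all of the work in the forward direction. For the converse, suppose $R$ is a Dedekind domain with torsion ideal class group. Since a Dedekind domain is a one-dimensional (hence finite dimensional) Pr\"ufer domain, this implication is already contained in Corollary \ref{2.4}, and the standing hypotheses that $R$ be Hilbert, of finite character, and contain a field of characteristic zero are not even needed. So I would dispose of the converse in one line and concentrate on the substantive direction: assuming $R[X]$ is radically finite, show that $R$ is a one-dimensional Noetherian integrally closed domain whose class group is torsion.

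The first concrete step is a \emph{contraction lemma} for height one. If $P$ is a height one prime of $R$, then $P[X]=PR[X]$ has height one in $R[X]$, so radical perfection forces $P[X]=\sqrt{(\theta)}$ for a single $\theta\in R[X]$. Comparing degrees shows that $\sqrt{(\theta)}\cap R=0$ whenever $\deg\theta\geq 1$, so necessarily $\theta\in R$ and $P=\sqrt{(\theta)}$ \emph{already in $R$}; moreover, transporting the chain-finiteness clause along the inclusion-preserving extension $J\mapsto J[X]$ makes $P/(\theta)$ an $R$-module of finite length, so $P$ is finitely generated, exactly as in the proof of Theorem 2.1. Thus every height one prime of $R$ is the radical of a principal ideal and is finitely generated. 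Granting for the moment that $\dim R=1$, the remainder is bookkeeping: $R$ is then finite dimensional and radically finite, so Corollary 2.2 makes $R[X]$, and hence $R$, Noetherian; a Noetherian one-dimensional integrally closed domain is Dedekind; and for a maximal ideal $M=\sqrt{(\theta)}$ of a Dedekind domain one has $M^{n}=(\theta)$, so $[M]$ is torsion and the class group, being generated by such classes, is torsion. One may alternatively package this last paragraph by invoking Theorem 2.1 of \cite{3}, exactly as was done in Corollary \ref{2.4}.

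The hard part, and the place where finite character and characteristic zero must do genuine work, is the passage from height one to the two global assertions $\dim R=1$ and $R$ integrally closed. Radical perfection alone does not force normality: a one-dimensional local ring such as a localization of $k[t^{2},t^{3}]$ already has its maximal ideal equal to the radical of a principal ideal. Normality and the dimension bound must therefore be extracted from the behaviour of the maximal ideals of $R[X]$ lying over a maximal ideal $M$ of $R$. Here characteristic zero guarantees that the residue field $R/M$ is infinite, so there are infinitely many maximal ideals $(M,X-a)$, $a\in R$, each of height $ht(M)+1$, while finite character controls, for each candidate generator, the finitely many maximal ideals through which it passes. The strategy is to play radical perfection and chain-finiteness of these height $ht(M)+1$ primes against this abundance of fibre maximal ideals to force $M$ to be generated up to radical by $ht(M)$ elements and to be locally principal; the failure of this in the non-normal or higher-dimensional case is precisely the obstruction that not every space curve is a set-theoretic complete intersection. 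I expect this normalization-and-dimension step to be the main obstacle, and the degree-contraction lemma above to be the only part of the forward direction that is genuinely routine.
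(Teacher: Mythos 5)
There is a genuine gap: your forward direction is a plan, not a proof, and it misplaces where the difficulty lies. You defer the assertion $\dim R=1$ to what you call the ``hard part,'' but in the paper this step is trivial and uses no radical finiteness whatsoever: it is Lemma 3.2 there, whose whole content is that in a Hilbert domain a height one prime $P$ is the intersection of the maximal ideals containing it, finite character makes these finitely many, so $P\supseteq M_1M_2\cdots M_n$ forces $P=M_i$ by primeness; hence every height one prime is maximal and $\dim R=1$. Tellingly, your proposal never invokes the Hilbert hypothesis at any point, which should have been a warning sign --- without it, your plan of playing radical perfection of the fibre maximal ideals $(M,X-a)$ against finite character is attempting something in the territory of Theorem \ref{4.3}, where the corresponding statement over a two-dimensional finite character UFD requires a genuinely delicate argument, and there is no indication your sketch would close.

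The second half of the gap is normality and torsion of the class group. Once $\dim R=1$ is in hand, the paper's route is short: $\dim R[X]\leq 3$, so Corollary 2.2 makes $R[X]$, hence $R$, Noetherian, and the conclusion is then exactly Theorem 2.1 of \cite{5}, which says that for a one-dimensional Noetherian domain containing a field of characteristic zero, radical perfection of all primes of $R[X]$ holds if and only if $R$ is Dedekind with torsion class group. That citation is precisely where characteristic zero does its work; your guess about infinite residue fields is in the right spirit but is never carried out, and your ``bookkeeping'' paragraph silently assumes integral closedness (``a Noetherian one-dimensional integrally closed domain is Dedekind'') which nothing preceding it establishes --- indeed you yourself observe, via $k[t^2,t^3]$, that radical perfection of height one primes cannot supply it. Your converse via Corollary \ref{2.4} is fine, and your degree-contraction lemma (that $P[X]=\sqrt{(\theta)}$ forces $\theta\in R$ and $P=\sqrt{(\theta)}$ in $R$) is correct, though it becomes redundant once Lemma 3.2 and Corollary 2.2 are available, and your transport of the chain-finiteness clause along $J\mapsto J[X]$ is more delicate than you indicate, since the definition's maximal member $A$ in $R[X]$ need not be an extended ideal. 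As written, the substantive implication of the theorem remains unproved.
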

To prove this theorem we need the following lemma.
\begin{lemma}
A finite character Hilbert domain is of Krull dimension one.
\end{lemma}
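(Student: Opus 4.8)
The plan is to show that every nonzero prime ideal of a finite character Hilbert domain $R$ is maximal, which immediately gives that $R$ has Krull dimension one (and dimension zero only in the degenerate case $R$ a field). The two hypotheses feed into the argument in complementary ways: the Hilbert (Jacobson) condition tells us that each prime of $R$ is the intersection of the maximal ideals containing it, while the finite character condition cuts that intersection down to a finite one.

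First I would fix a nonzero prime ideal $P$ of $R$ and pick a nonzero element $a \in P$. By the finite character hypothesis, $a$ lies in only finitely many maximal ideals, say $M_1, \ldots, M_n$. Since every maximal ideal $M$ with $P \subseteq M$ necessarily contains $a$, each such $M$ occurs among the $M_i$; hence only finitely many maximal ideals contain $P$. After relabeling, call these $M_1, \ldots, M_k$. The Hilbert hypothesis, which asserts that $P$ equals the intersection of all maximal ideals containing it, then yields $P = \bigcap_{i=1}^{k} M_i$, a finite intersection.

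Next I would exploit the primeness of $P$. From $\prod_{i=1}^{k} M_i \subseteq \bigcap_{i=1}^{k} M_i = P$ and the fact that $P$ is prime, it follows that $M_j \subseteq P$ for some $j$; combined with $P \subseteq M_j$ this forces $P = M_j$, so $P$ is maximal. Since $P$ was an arbitrary nonzero prime of $R$, every nonzero prime is maximal, and therefore $\dim R = 1$.

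I do not expect any serious obstacle here; the substance of the proof lies entirely in reading the Hilbert condition as ``each prime is the intersection of the maximal ideals above it'' and then noting that finite character reduces this to a finite intersection, after which primeness collapses it to a single maximal ideal. The only point deserving a word of care is the case where $R$ is a field, in which there is no nonzero prime and the dimension is zero; this is harmless for the intended application in Theorem 3.1.
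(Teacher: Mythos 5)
Your proof is correct and follows essentially the same route as the paper's: finite character bounds the set of maximal ideals over a nonzero prime $P$ by a finite set, the Hilbert property gives $P=\bigcap_{i=1}^{k}M_i$, and primeness applied to $\prod_{i=1}^{k}M_i\subseteq P$ collapses the intersection to a single maximal ideal. If anything, your write-up is slightly cleaner than the paper's, which loosely says $P$ ``is a product of these maximal ideals'' where it means the intersection (with the product merely contained in it), and you also handle the degenerate field case and the passage from height-one primes to all nonzero primes explicitly.
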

\begin{proof}
Let $R$ be a finite character Hilbert domain and $P$ be any height one prime ideal of $R$. Then $P$ is contained in only finitely many maximal ideals $M_1,M_2,...,M_n$ of $R$ and hence is a product of these maximal ideals. Thus $P=M_1M_2...M_n$ and so $P=M_i$ for some $1\leq i \leq n $ is a maximal ideal. That is each height one prime ideal of $R$ is a maximal ideal and therefore $R$ is of Krull dimension one.
\end{proof}
\begin{corollary}
Let $R$ be a finite character Hilbert domain. Then for any non-zero ideal $I$ of $R$ the polynomial ring $R/\sqrt{I}~[X]$ over the ring $R/\sqrt{I}$ is radically finite.
\end{corollary}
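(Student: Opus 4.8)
The plan is to reduce the statement, via the structure of $\sqrt{I}$, to showing that a finite direct product of polynomial rings over fields is radically finite. First I would invoke the preceding Lemma: $R$ has Krull dimension one, so every non-zero prime of $R$ is maximal. Choosing a non-zero element $\theta\in I$, the finite character hypothesis yields only finitely many maximal ideals $M_1,\dots,M_m$ containing $\theta$, and since every prime containing the non-zero ideal $I$ is one of these maximal ideals we obtain $\sqrt{I}=M_1\cap\cdots\cap M_m$. As distinct maximal ideals are pairwise comaximal, the Chinese Remainder Theorem gives $R/\sqrt{I}\cong\prod_{i=1}^m R/M_i=\prod_{i=1}^m K_i$, a finite product of fields, whence $(R/\sqrt{I})[X]\cong\prod_{i=1}^m K_i[X]$, a finite product of PIDs. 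It therefore suffices to prove that $T:=\prod_{i=1}^m K_i[X]$ is radically finite.

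For the core step I would exploit that every ideal of the finite product $T$ is a product $\prod_i A_i$ of ideals $A_i\subseteq K_i[X]$, that $\sqrt{\prod_i A_i}=\prod_i\sqrt{A_i}$, and that the primes of $T$ are exactly the ideals $P=\prod_{i\ne j}K_i[X]\times Q$ for a single index $j$ and a prime $Q$ of $K_j[X]$, with $ht(P)=ht(Q)$. The key observation is that any ideal $A'$ of $T$ with $\sqrt{A'}=P$ must have $A'_i=K_i[X]$ for $i\ne j$ (since $\sqrt{A'_i}=K_i[X]$ forces $1\in A'_i$) and $\sqrt{A'_j}=Q$; hence $A'\mapsto A'_j$ is an inclusion-preserving bijection between the ideals of $T$ with radical $P$ and the ideals of $K_j[X]$ with radical $Q$. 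This bijection moreover preserves the minimal number of generators, since $\prod_{i\ne j}K_i[X]\times B$ is generated by the same number of elements as $B$ (place one generator of $B$ together with the off-$j$ identities, the remaining generators with zeros).

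With this dictionary the two height cases become immediate, using that $K_j[X]$ is a PID. If $ht(P)=1$, then $Q=(f)$ for an irreducible $f$, and $P$ is itself principal, generated by the element equal to $f$ in slot $j$ and $1$ elsewhere; thus $P=\sqrt{(\theta)}$ with one generator $=ht(P)$, the maximal member $A$ equals $P$, and the chain between $A$ and $P$ is trivial. If $ht(P)=0$, then $Q=(0)$, and since $K_j[X]$ is a domain the only ideal of $K_j[X]$ with radical $(0)$ is $(0)$ itself; by the dictionary the only ideal of $T$ with radical $P$ is $P$, which is principal, generated by the idempotent $(1,\dots,1,0,1,\dots,1)$ (a zero divisor of $T$), so the height-zero clause holds and again $A=P$ with trivial chain. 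Hence every prime of $T$ is radically perfect with the required finite chain, and $T$ is radically finite.

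I expect the main obstacle to be the bookkeeping of the second paragraph: one must check carefully that passing to the product disturbs neither the minimal-generator count defining radical perfectness nor the finiteness of the chain between the maximal member $A$ and $P$, and in particular that the height-zero primes of $T$, which are precisely its minimal primes, are correctly governed by the zero-divisor clause of the definition rather than by the vacuous ``zero generators'' reading. Once the product-of-ideals description and the identity $\sqrt{\prod_i A_i}=\prod_i\sqrt{A_i}$ are in place, the reduction to the single-factor PID case renders every remaining verification routine.
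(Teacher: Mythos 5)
Your proposal is correct and follows essentially the same route as the paper: dimension one via the preceding Lemma, then $\sqrt{I}$ as a finite product (equivalently, intersection) of maximal ideals, the Chinese Remainder Theorem giving $R/\sqrt{I}$ as a finite product of fields, and hence $(R/\sqrt{I})[X]$ as a finite product of PIDs. The only difference is that the paper stops there, asserting that a principal ideal ring is radically finite, whereas you explicitly verify this last step (the ideal-by-ideal dictionary between $T$ and its factors, including the height-zero primes handled by the zero-divisor clause), a verification that is sound and fills in detail the paper leaves implicit.
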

\begin{proof}
From the above proof we have $\sqrt{I}=M_1M_2...M_n$ for some maximal ideals $M_1,M_2,...,M_n$ of $R$. Hence $R/\sqrt{I}$ is a finite direct product of fields. But then $R/\sqrt{I}~[X]$ is a finite direct product of principal ideal domains and therefore itself is a principal ideal ring, and hence is radically finite.
\end{proof}

\begin{proof}\textit{of Theorem 3.1.} $R$ being a finite character Hilbert domain is of Krull dimension one which implies that $R[X]$ is of Krull dimension $\leq 3$. Thus, we have $R[X]$ a finite dimensional radically finite ring and therefore is Noetherian. But then the ring $R$ is Noetherian. That is $R$ is a Noetherian integral domain of Krull dimension one containing a field of characteristic zero over which the polynomial ring $R[X]$ has all its prime ideals are radically perfect. Hence the result now follows from Theorem 2.1 of [5].
\end{proof}

\section{Radically Finite UFD}
Here we present a not necessarily Noetherian UFD of Krull dimension 2 whose prime ideals are radically perfect and show that there does not exist a finite character radically finite UFD $R$ of Krull dimension 2 over which the polynomial ring $R[X]$ over $R$ is radically finite. From this  we deduce that not all space curves are set theoretic complete intersection.
 We note that there are non-Noetherian UFD of Krull dimension 2 (see, e.g. \cite{7}).
\begin{prop}\label{prop}
  Each prime ideal of a finite character UFD of Krull dimension 2 is radically perfect.
\end{prop}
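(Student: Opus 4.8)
The plan is to split according to the height of the prime $P$, which in a domain of Krull dimension $2$ is $0$, $1$, or $2$. The height $0$ case concerns only the zero ideal, and $\sqrt{(0)}=(0)$ is trivially the radical of a principal ideal. For the height $1$ case I would use the standard fact that in a UFD every height one prime is principal: picking any nonzero $a\in P$ and factoring it into primes, some prime factor $p$ lies in $P$, so $(p)$ is a nonzero principal prime contained in $P$, and minimality of height forces $P=(p)=\sqrt{(p)}$. In both cases $P$ is radically perfect with the least number of generators equal to $ht(P)$.

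The real content is the height $2$ case, in which $P$ is maximal, since a height $2$ prime in a dimension $2$ domain cannot be properly contained in a further prime. My strategy is to exhibit $P$ as the radical of a suitable two-generated ideal. First choose a prime element $p\in P$ as above. By the finite character hypothesis only finitely many maximal ideals $P=M_1,M_2,\dots,M_n$ contain $p$. Since $(p)$ is a height one prime properly contained in $P$ and each $M_i$ with $i\geq 2$ is a maximal ideal distinct from $P$, $P$ is not contained in any of these primes; prime avoidance then produces an element
\[ b\in P\setminus\Big((p)\cup\bigcup_{i=2}^{n}M_i\Big). \]
Because $p\nmid b$ and $p$ is prime, $p$ and $b$ have no common prime factor.

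It then remains to compute $\sqrt{(p,b)}$. Any prime $Q$ containing $(p,b)$ lies inside some maximal ideal containing $p$, hence inside some $M_i$; as $Q$ also contains $b\notin M_i$ for $i\geq 2$, that maximal ideal must be $P$, so $Q\subseteq P$. Moreover $Q$ cannot be a height one prime $(q)$, for that would make $q$ a common prime factor of $p$ and $b$; and $Q\neq(0)$ since $p\neq 0$. Hence $Q=P$, giving $\sqrt{(p,b)}=P$. To see that $2$ generators are necessary, I would use that in a UFD the radical of a nonzero principal ideal is the product of the distinct prime factors of its generator, hence an intersection of height one primes, so a height $2$ prime can never be the radical of a principal ideal. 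Thus the least such number of generators equals $ht(P)=2$, and $P$ is radically perfect. The main obstacle is precisely the simultaneous choice of $b$: it must drive $(p,b)$ out of every extraneous maximal ideal while remaining coprime to $p$ so that $(p,b)$ escapes all height one primes, and it is the finite character condition together with UFD factorization that keeps these two demands compatible.
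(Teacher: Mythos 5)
Your proof is correct, and its skeleton matches the paper's: height one primes are principal in a UFD, a height two prime is maximal, and finite character reduces the problem to separating $P$ from the finitely many other maximal ideals containing a chosen prime element $\Pi_1$. The difference lies in how the second generator is produced. The paper takes $\Pi_2\in M$ with $\Pi_2+z=1$ for some $z\in\bigcap_{i=1}^{n}M_i$ (comaximality of $M$ with the other maximal ideals), while you take $b\in P$ by prime avoidance applied to $(p)\cup M_2\cup\dots\cup M_n$. Your choice is actually tighter: by explicitly avoiding $(p)$ you rule out the possibility that the second generator is a multiple of $p$, a case the paper's ``it is clear that $M=\sqrt{(\Pi_1,\Pi_2)}$'' silently skips over — as written, nothing in the paper's choice prevents $\Pi_2\in(\Pi_1)$, in which case $(\Pi_1,\Pi_2)=(\Pi_1)$ and the radical collapses to the height one prime $(\Pi_1)$ rather than $M$ (this is easily repaired, e.g.\ by adjusting $\Pi_2$ by an element of $M\cap\bigcap_i M_i$ outside $(\Pi_1)$, which exists since no height two maximal ideal lies in $(\Pi_1)$, but the repair is not in the text). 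You also prove the lower bound — that $P$ cannot be the radical of a principal ideal, since in a UFD the radical of a nonzero principal ideal is an intersection of height one primes — which the paper leaves implicit but which the definition of radically perfect genuinely requires. One thing the paper's formulation buys that yours does not: the specific comaximal data $\Pi_2+z=1$ with $z\in\bigcap_{i=1}^{n}M_i$ is reused verbatim in the proof of Theorem \ref{4.3}, where $z$ is needed to form the polynomial $g=\Pi_2+zf$; your element $b$ comes with no such companion $z$, so your variant, though cleaner here, would not feed directly into that later argument.
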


\begin{proof}
  Let $R$ be such a domain and $P$ be any non-zero prime ideal of $R.$ Then we have $ht(P)=1$ or 2. If $ht(P)=1$ then $P$ is principal and so is radically perfect. So suppose that $ht(P)=2.$ (We now replace $P$ by $M$ for the sake of the application of this proof  in the next result). Thus $P=M$ is a maximal ideal of $R$ of height 2. Let $\Pi_1$ be any irreducible element of $M,$ then $\Pi_1$ is contained in only finitely many maximal ideals of $R,$ say $M,M_1,M_2,...,M_n$ are the only maximal ideals of $R$ containing  $\Pi_1.$ Next choose an element $\Pi_2$ in $M$ and $z$ in $\bigcap_{i=1}^{n}M_i$ such that $\Pi_2+z=1.$ Then it is clear that $M=\sqrt{(\Pi_1,\Pi_2)}.$  Therefore $M$ is radically perfect.
\end{proof}

\begin{definition}
Let $I$ be an ideal of $R.$ Then $j-$radical of $I$ is the intersection of all maximal ideals which contain $I$ and will be denoted by $j-\sqrt{I}.$
\end{definition}

\begin{theorem}\label{4.3}
  There does not exist a finite character UFD $R$ of Krull dimension 2 over which the polynomial ring $R[X]$ is radically finite.
\end{theorem}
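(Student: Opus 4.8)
```latex
The plan is to argue by contradiction: suppose $R$ is a finite character UFD of
Krull dimension $2$ over which $R[X]$ is radically finite, and derive a
contradiction with the UFD hypothesis. The first step is to pin down the
dimension: since $R$ has Krull dimension $2$, and $R$ is a Noetherian-like
(in fact we will see it is forced to be Noetherian) domain, the polynomial ring
$R[X]$ has finite Krull dimension (between $3$ and $5$ in general, but for the
argument the only thing needed is that it is finite). Hence, being a finite
dimensional radically finite ring, Corollary~2.2 forces $R[X]$ to be Noetherian,
and therefore $R$ itself is Noetherian. So from the outset we may upgrade $R$ to
a \emph{Noetherian} UFD of Krull dimension $2$ that is of finite character.

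The heart of the matter is to exhibit a single prime ideal of $R[X]$ whose
radically perfect behaviour collides with $R$ being a UFD. The natural candidate
is a height $2$ prime $Q$ of $R[X]$ that is not principal up to radical in the
required sense: by analogy with the space-curve picture mentioned in the
introduction, one wants a prime $Q \subset R[X]$ of height $2$ that is a
``space curve'' and cannot be cut out set-theoretically by two elements unless
$R$ degenerates. Concretely, I would take a height $2$ maximal-like prime $M$ of
$R$ as produced in Proposition~\ref{prop} (where $M = \sqrt{(\Pi_1,\Pi_2)}$),
pass to $R[X]$, and examine the prime $(M, X)$ or a suitable prime lying over
$M$. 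The radically finite hypothesis on $R[X]$ would then assert that this
height $3$ prime is the radical of an ideal generated by $3$ elements with only
finitely many ideals in a chain up to it; combined with Theorem~2.1 this forces
strong finiteness, and one tracks what that imposes back on the structure of
$R$ at the height $2$ prime $M$.

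The key step, and the one I expect to be the main obstacle, is converting the
radically finite condition on $R[X]$ into a statement that the height $2$ prime
$M$ of the UFD $R$ is itself radically the image of a principal ideal, i.e. that
$M$ becomes a set-theoretic complete intersection of the ``wrong'' codimension.
Here the $j$-radical machinery of Definition~4.2 should be the right tool: using
finite character, one controls which maximal ideals contain a given element, and
one shows that if $R[X]$ were radically finite then the generator count for the
radical of $(M,X)$ in $R[X]$ would drop below the height, contradicting that
$M$ has height $2$ and that in a UFD a height $2$ prime cannot be the radical of
a principal ideal (since height one primes, and only those, are principal). The
finite character assumption is exactly what lets one replace infinite
intersections of maximal ideals by finite products, as in the Lemma and its
Corollary, and so keep the radical computations under control.

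Finally I would assemble these pieces: the Noetherian reduction, the explicit
height $2$ prime $M$ with $M=\sqrt{(\Pi_1,\Pi_2)}$ from Proposition~\ref{prop},
and the $j$-radical analysis in $R[X]$ combine to show that the assumed
radically finite structure on $R[X]$ forces $M$ to be radically principal, which
is impossible in a $2$-dimensional UFD. This contradiction establishes the
non-existence claim, and the application to space curves follows because a
set-theoretic complete intersection statement for all space curves would supply
precisely such a radically finite polynomial ring.
```
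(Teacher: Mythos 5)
Your opening reduction is correct and matches the paper: $\dim R[X]\leq 5$ is finite, so Corollary~2.2 makes $R[X]$ Noetherian, hence $R$ is Noetherian and $\dim R[X]=3$. From there, however, your proposal has a genuine gap at exactly the point you yourself flag as ``the main obstacle'': you never supply a mechanism converting the radically finite hypothesis on $R[X]$ into the statement that the height $2$ maximal ideal $M$ of $R$ is the radical of a principal ideal. Nothing in the definition can do this. Radical finiteness asserts that each prime is the radical of an ideal generated by \emph{height-many} elements (plus a chain condition); it never forces the generator count to drop \emph{below} the height --- indeed Krull's height theorem forbids such a drop in the Noetherian setting you have just established, so a hypothesis cannot ``force'' it; only an explicit construction can produce the forbidden configuration and thereby yield a contradiction. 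Your sketch of ``tracking what the finiteness imposes back on $R$ at $M$'' via the $j$-radical is a placeholder, not an argument, and the intended contradiction ($M$ radically principal in a $2$-dimensional UFD) is never reached.

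The paper's proof runs in the opposite direction and is entirely constructive, upstairs in $R[X]$. It picks a height $3$ maximal ideal $M^{*}$ of $R[X]$, notes that $M=M^{*}\cap R$ is a height $2$ maximal ideal of $R$ (no chain of three distinct primes of $R[X]$ contracts to a single prime of $R$), writes $M^{*}=(M,f)$ with $f$ monic, and imports from Proposition~\ref{prop} the data $M=\sqrt{(\Pi_1,\Pi_2)}$, the finitely many maximal ideals $M,M_1,\dots,M_n$ containing the irreducible $\Pi_1$, and $z\in\bigcap_{i=1}^{n}M_i$ with $\Pi_2+z=1$. The decisive trick --- absent from your proposal --- is the single element $g=\Pi_2+zf$: one first shows $M^{*}=j-\sqrt{(\Pi_1,g)}$ (a height $1$ contraction $N=(\Pi_1)$ is excluded by localizing at $M$, where $z$ is a unit so $g$ is monic up to a unit, giving an integral extension $R_{M}/N_{M}\subseteq R[X]_{M}/N^{*}_{M}$ and forcing $N=M$; the case $N=M_i$ is killed because then $N^{*}$ would contain $\Pi_1$, $z$ and $g$, hence $1=\Pi_2+z$), and then, via G-ideal considerations applied to the possible contractions $(\Pi_1),M,M_1,\dots,M_n$, that $M^{*}=\sqrt{(\Pi_1,g)}$. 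This exhibits a height $3$ prime as the radical of a $2$-generated ideal, contradicting $\dim R[X]=3$ by the height theorem --- so the contradiction lives in $R[X]$ against Krull's bound, not in $R$ against unique factorization. Without the explicit element $g=\Pi_2+zf$ and the case analysis on contractions of primes containing $(\Pi_1,g)$, your proposal does not close.
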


\begin{proof}
Suppose on the contrary that such ring $R$ exists. Since $R$ is of dimension 2, $R[X]$ is of dimension $\leq 5$. Hence $R[X]$ is a finite dimensional radically finite ring and therefore is Noetherian and hence  $R$ is Noetherian, and so  $dim R[X]=3$. Let $M^{*}$ be any maximal ideal of $R[X]$ of height 3, and  $M=M^{*}\cap R$ be the contraction of $M^{*}$ in $R.$ Then $M$ is a height two prime ideal of $R$ i.e. a  maximal ideal of $R.$ This is because there can not be a chain of three distinct prime ideals in $R[X]$ contracting to the same prime ideal in $R$ [\cite{8}, Theorem 37]. Hence $M^{*}=(M,f)$ for some irreducible monic polynomial $f$ in $R[X]$ [\cite{8}, Theorem 28]. Since $M$ is a height two prime in a UFD R of Krull dimension 2, from the proof of Proposition \ref{prop} we have $M=\sqrt{(\Pi_1,\Pi_2)}$ where the only maximal ideals of $R$ containing the irreducible element $\Pi_1$  are $M, M_1, M_2,...,M_n$ and that $\Pi_2+z=1$ for some element $z$ in $\cap_{i=1}^{n}M_i.$
Next we set $g=\Pi_2+zf,$ then we have the ideal $(\Pi_1,g)$ is contained in $M^{*}.$ We claim that $M^{*}=\sqrt{(\Pi_1, g)}$ and that would contradict our assumtion. To prove this claim we first show that $M^{*}=j-\sqrt{(\Pi_1,g)}$. Suppose that $N^{*}$ is any maximal ideal of $R[X]$ containing the ideal $(\Pi_1,g).$ Then $N=N^{*}\cap R$ is a prime ideal of $R$ that contains the irreducible element $\Pi_1.$ Hence is either
of height 1 or is a height of 2. If $N$ is of height 1, then it is generated by an irreducible element and since it contains the irreducible element $\Pi_1$, we see that
$N=(\Pi_1)\subseteq M.$ Now localizing $R[X]$ at the maximal ideal $M$ of $R$ and noting that $z$ is not in $M$ we have the image of  $g=\Pi_2+zf$ in $N^{*}_{M}$ is a monic polynomial and therefore $R_{M}/N_{M}\subseteq R[X]_{M}/N^{*}_{M}$ is an integral extension. Since $R[X]_{M}/N^{*}_{M}$ is a field, $R_{M}/N_{M}$ is a field . Hence
$N_{M}=M_{M}$ in $R_{M}.$ But then $N=M$ in $R.$ That is $N$ can not be of height 1, and so is a maximal ideal that contains $\Pi_1 $ and hence is one of $M, M_{1}, M_{2},...,M_{n}.$
  If $N=M_i$ for some $i=1,2,...,n;$ then $N$ contains $z$ and so $N^{*}$ contains $\Pi_1, z, g=\Pi_2+zf$ and therefore it also contains $\Pi_2=g-zf.$ But then $N^{*}$ contains $1=\Pi_2+z,$ a contradiction. Therefore $N=M.$ Hence $N^{*}$ contains $\Pi_1, \Pi_2,   g=\Pi_2+zf$ and so it contains $g-\Pi_2=zf.$ Since $z$ is not in $N$ it is not in $N^{*}$ and therefore $f\in N^{*}.$ But then $N^{*}=(M,f)=M^{*}.$ That is $M^{*}=j-\sqrt{(\Pi_1, g)}.$

We now show that in fact $M^{*}$ is the only prime ideal of $R[X]$ containing the ideal $(\Pi_1,g)$. For let $P^{*}$ be any prime  ideal  of $R[X]$ containing  the ideal $(\Pi_1,g)$. Then clearly $M^{*}=j-\sqrt{P^{*}}$. Let  $P=P^{*}\cap R$ be the contraction of $P^{*}$ in $R$. Then $P$ contains the irreducible element $\Pi_1$ and hence is one of the prime ideals $(\Pi_1)$, $M, M_1, M_2,...,M_n$, as these are the only primes in $R$ containing  $\Pi_1$. Since each prime ideal $(\Pi_1)$, $M, M_1, M_2,...,M_n$, is a G-ideal [\cite{8}, Section 1-3, Theorem 19 and Theorem 146], P is a G-ideal. But this is so if and only if P is the contraction of a maximal ideal of the polynomial ring $R[X]$ [\cite{8}, Theorem 27]. Thus $P^{*}=M^{*}$ and therefore $M^{*}=\sqrt{(\Pi_1,g)}$.
This implies that $R[X]$ is of Krull dimension 2 which is in contradiction of being of Krull dimension 3.  Therefore such ting $R$ does not exist.
\end{proof}



\begin{corollary}
  Space curves are not always set theoretic complete intersection.
\end{corollary}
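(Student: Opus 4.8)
The plan is to derive the final corollary directly from Theorem~\ref{4.3} by exhibiting a concrete finite character UFD of Krull dimension~$2$ to which the theorem applies, and then translating the failure of radical finiteness of $R[X]$ into a statement about space curves. First I would recall the classical dictionary between ideals of a polynomial ring over a field and affine varieties: a space curve is a one-dimensional irreducible closed subset $C \subseteq \mathbb{A}^3_k$, corresponding to a height-two prime ideal $\mathfrak{p}$ of the coordinate ring $k[x,y,z]$, and $C$ is a set-theoretic complete intersection precisely when $\sqrt{\mathfrak{p}} = \mathfrak{p}$ is the radical of an ideal generated by $\operatorname{ht}(\mathfrak{p}) = 2$ elements, i.e.\ when $\mathfrak{p}$ is radically perfect in the sense recalled in the introduction.

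The key step is to set $R = k[x,y]$ with $k$ a field of characteristic zero, so that $R[X] = k[x,y,z]$. I would check that $R$ is a finite character UFD of Krull dimension~$2$: it is a UFD because polynomial rings over a field are UFDs, it has Krull dimension~$2$, and it has finite character since any nonzero polynomial lies in only finitely many maximal ideals (its zero locus in $\mathbb{A}^2$ is a proper closed set, hence a finite union of irreducible curves, and only finitely many maximal ideals of height two — points lying on the curve — need be considered once one invokes the finite-character argument; more cleanly, a nonzero element generates a height-one ideal whose minimal primes are finite in number). Granting this, Theorem~\ref{4.3} asserts that $R[X] = k[x,y,z]$ is \emph{not} radically finite.

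From non-radical-finiteness I would extract the geometric conclusion. Radical finiteness of $R[X]$ would require, in particular, that \emph{every} prime ideal of $R[X]$ be radically perfect; the content of Theorem~\ref{4.3} is precisely that this fails. Hence there must exist at least one prime $\mathfrak{p}$ of $k[x,y,z]$ that is not radically perfect, and by the dimension bookkeeping in the proof of the theorem the obstruction occurs at a height-two prime — exactly a space curve. Such a $\mathfrak{p}$ then cannot be the radical of a two-generated ideal, so the associated curve is not a set-theoretic complete intersection. I would state the corollary as asserting the existence of such a curve rather than a property of a named curve, since the theorem yields non-radical-perfectness without pinpointing the offending prime.

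The main obstacle I anticipate is the passage from ``$R[X]$ is not radically finite'' to ``some height-two prime is not radically perfect.'' The definition of radically finite bundles together two conditions (radical perfectness of every prime, and the finiteness/maximality condition on chains of ideals with a given radical), so a priori the failure detected by Theorem~\ref{4.3} could in principle be a failure of the chain condition rather than of radical perfectness. To close this gap cleanly I would either track through the proof of Theorem~\ref{4.3} to confirm that the contradiction it produces is a contradiction to radical \emph{perfectness} of a specific height-two prime (the proof does construct an ideal $(\Pi_1,g)$ whose radical is forced to be a height-three maximal ideal, collapsing the dimension count), or invoke the well-known fact that over an algebraically closed field there genuinely exist space curves that are not set-theoretic complete intersections, and present the theorem as a ring-theoretic reproof of this phenomenon. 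Either route delivers the corollary.
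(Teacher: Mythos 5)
Your argument fails at its foundation: $R=k[x,y]$ is \emph{not} a finite character domain, so Theorem~\ref{4.3} simply does not apply to it. Finite character means every nonzero element lies in only finitely many maximal ideals; in $k[x,y]$ with $k$ infinite, the element $x$ lies in the maximal ideals $(x,\,y-a)$ for every $a\in k$. Your parenthetical justification conflates the finitely many \emph{minimal primes} over the height-one ideal $(f)$ with the \emph{maximal ideals} containing $f$, of which there are infinitely many: a nonzero nonunit vanishes on a curve, and a curve passes through infinitely many points. Nor is the finite character hypothesis incidental to Theorem~\ref{4.3}: its proof (via Proposition~\ref{prop}) uses it essentially, choosing $\Pi_2$ and $z$ with $\Pi_2+z=1$ where $z$ lies in all but one of the finitely many maximal ideals containing the irreducible element $\Pi_1$. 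So the theorem yields no conclusion whatsoever about $k[x,y,z]$, and your key step collapses.

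The paper instead argues in the contrapositive and \emph{manufactures} a finite character ring by localization: assuming all space curves are set theoretic complete intersections, Theorem 1 of \cite{9} (Eisenbud--Evans) gives that every prime ideal of $K[Z,Y,X]$ is radically perfect; taking $R$ to be the localization of $K[Z,Y]$ at (the complement of) a union of finitely many height-two maximal ideals produces a semilocal --- hence trivially finite character --- UFD of Krull dimension $2$, and since localization commutes with radicals, every prime of $R[X]$ remains radically perfect. As $R[X]$ is Noetherian, radical perfectness of all primes already gives radical finiteness (the chain condition in the definition is automatic there, as the paper notes), so Theorem~\ref{4.3} is contradicted. This contrapositive route also dissolves the gap you worried about between failure of radical finiteness and failure of radical perfectness, since the implication is only needed in the direction ``all primes radically perfect $\Rightarrow$ radically finite.'' Finally, your fallback --- invoking ``the well-known fact'' that non--set-theoretic-complete-intersection space curves exist --- is doubly untenable: it is circular, being exactly what the corollary sets out to establish, and it is not a known fact at all (in positive characteristic Cowsik--Nori show every affine space curve \emph{is} a set theoretic complete intersection, and in characteristic zero the question is a long-standing open problem), which is precisely why the paper derives it from Theorem~\ref{4.3} rather than quoting it.
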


\begin{proof}
  Suppose otherwise that all space curves are set theoretic complete intersection. Then it follows from Theorem 1 of \cite{9} that all prime ideals of $K[Z,Y,X]$ (K is a field Z,Y,X are indeterminates) are radically perfect. Which in turn would imply that the polynomial ring $R[X]$ is radically finite, where $R$ is the localization of the ring $K[Z,Y]$ at the union of finitely many maximal ideals of height 2. (This works because localization commutes with the formation of radicals.) Which is not possible by Theorem \ref{4.3}.
\end{proof}

Finally, we note that there are non-principal Noetherian integral domains of Krull dimension one containing a field of characteristic zero over which the polynomial ring is not radically finite [ \cite{2}, Example1, page 6469]




\bibliographystyle{amsplain}

\end{document}